\newtheorem{theorem}{Theorem}
\newtheorem{corollary}[theorem]{Corollary}
\newtheorem{proposition}[theorem]{Proposition}
\newtheorem{ex}[theorem]{Example}
\newenvironment{example}{\begin{ex} }{\end{ex}}
\newtheorem{definition}[theorem]{Definition}
\newcommand{\cb}    [1]{\ensuremath{\left  \{      #1  \right \}       }}
\newcommand{\cbgg}  [1]{\ensuremath{\biggl \{      #1  \biggr \}       }}
\newcommand{\of}    [1]{\ensuremath{\left (        #1  \right )        }}
\newcommand{\ofg}   [1]{\ensuremath{\bigl (        #1  \bigr  )        }}
\newcommand{\st} {\ensuremath{|\;}}
\newcommand{\conv}  {{\rm conv \,}}
\newcommand{\cone}{{\rm cone\,}}
\newcommand{\X}{\mathcal{X}}
\renewcommand{\P}{\mathcal{P}}
\renewcommand{\S}{\mathcal{S}}
\newcommand{\R}{\mathbb{R}}
\DeclareMathOperator*{\proj}{proj}
\newcommand{\smz}{\!\setminus\!\{0\}}
\newcommand{\dir}{^{\rm dir}} \newcommand{\poi}{^{\rm poi}}
\DeclareMathOperator{\extr}{ext}
\author{Andreas L\"{o}hne \thanks{Friedrich Schiller University Jena,
    Department of Mathematics, 07737 Jena, Germany,
    andreas.loehne@uni-jena.de} \and Benjamin Wei{\ss}ing
  \thanks{Friedrich Schiller University Jena, Department of
    Mathematics,  07737 Jena, Germany,
    benjamin.weissing@uni-jena.de}}
\title{Equivalence between polyhedral projection, multiple objective
  linear programming and vector linear programming}
\begin{document}
\maketitle

\begin{abstract} 
Let a polyhedral convex set be given by a finite number of linear
inequalities and consider the problem to project this set onto a
subspace. This problem, called polyhedral projection problem, is shown
to be equivalent to multiple objective linear programming. The number
of objectives of the multiple objective linear program is by one
higher than the dimension of the projected polyhedron. The result
implies that an arbitrary vector linear program (with arbitrary
polyhedral ordering cone) can be solved by solving a multiple
objective linear program (i.e.\ a vector linear program with the
standard ordering cone) with one additional objective space dimension.
\medskip

\noindent
{\bf Keywords:} vector linear programming, linear vector optimization,
multi-objective optimization, irredundant solution, representation of
polyhedra
\medskip

\noindent
{\bf MSC 2010 Classification:} 15A39, 52B55, 90C29, 90C05

\end{abstract}

\begin{section}{Problem formulations and solution concepts}
Let $k,n,p$ be positive integers and let two matrices $G \in \R^{k
  \times n}$, $H\in \R^{k \times p}$ and a vector $h \in \R^k$ be
given. We consider the problem of {\em polyhedral projection}, that
is,
\begin{equation}\label{P}
	\tag{PP} \text{compute } Y = \cb{y \in \R^p \st \exists x \in
          \R^n: G x + H y \geq h}\text{.}
\end{equation}
A point $(x,y) \in \R^n \times \R^p$ is said to be {\em feasible} for
\eqref{P} if it satisfies $G x + H y \geq h$. A direction $(x,y) \in
\R^n \times (\R^p\smz)$ is said to be {\em feasible} for \eqref{P} if
it satisfies $G x + H y \geq 0$. A pair $(X\poi,X\dir)$ is said to be
{\em feasible} for \eqref{P} if $X\poi$ is a nonempty set of feasible
points and $X\dir$ is a set of feasible directions.\par
We use $\proj:\R^{n+p}\to\R^{p}$ to denote the projection of a set $X
\subseteq \R^{n+p}$ onto its last $p$ components.  For a nonempty set
$B\subseteq \R^p$, $\conv B$ is the convex hull, and
$\cone B:=\cb{\lambda x \st \; \lambda \geq 0,\; x \in \conv B}$
is the convex cone generated by this set.  We set $\cone \emptyset :=
\cb{0}$.
\begin{definition}\label{def.sltnPP}
A pair $(X\poi,X\dir)$ is called a {\em solution} to \eqref{P} if it
is feasible, $X\poi$ and $X\dir$ are finite sets, and
\begin{equation}\label{eq_repr_k}
Y = \conv \proj X\poi + \cone \proj X\dir\text{.}
\end{equation}
\end{definition}

For positive integers $n,m,q,r$, let $A \in \R^{m\times n}$, $P \in
\R^{q \times n}$, $Z \in \R^{q \times r}$ and $b \in \R^m$ be
given. Consider the {\em vector linear program}
\begin{equation*}\label{VLP}
	\tag{VLP} \text{minimize } Px \text{ s.t. } A x \geq b\text{,}
\end{equation*}
where minimization is understood with respect to the {\em ordering
  cone}
$$C:=\cb{y \in \R^q \st Z^T y \geq 0}.$$ This means that we use the
ordering
\begin{equation}\label{eq_order}
	 w \leq_C y \quad:\Leftrightarrow\quad y-w \in C
         \quad\Leftrightarrow\quad Z^T w \leq Z^T y.
\end{equation}
We assume $ \ker Z^T := \cb{y \in \R^q \st Z^T y = 0} = \cb{0}$, which
implies that the ordering cone $C$ is pointed. Thus, \eqref{eq_order}
defines a partial ordering. If $Z$ is the $q\times q$ unit matrix,
\eqref{VLP} reduces to a {\em multiple objective linear program}. This
special class of \eqref{VLP} is denoted by (MOLP).

A point $x \in \R^n$ is called feasible for \eqref{VLP} if it
satisfies the constraint $A x \geq b$. A direction $x \in \R^n \smz$
is called feasible for \eqref{VLP} if $A x \geq 0$. The feasible set
of \eqref{VLP} is denoted by
$$S:=\cb{x \in \R^n \st A x \geq b}.$$ A pair $(S\poi, S\dir)$ is
called feasible for \eqref{VLP} if $S\poi$ is a nonempty set of
feasible points and $S\dir$ is a set of feasible directions. The
recession cone of $S$ is the set $0^+S = \cb{x \in \R^n \st A x \geq
  0}$. This means that $0^+S\smz$ represents the set of feasible
directions. We refer to a {\em homogeneous problem} (associated to
\eqref{VLP}) if the feasible set $S$ is replaced by $0^+S$.\par
 For a set $X \subseteq \R^n$, we write $P[X]:=\cb{P x \st x \in
  X}$. The set
$$\P:=P[S]+C = \cb{y \in \R^q \st \exists x \in \R^n,\; Z^T y \geq Z^T P x,\; Ax \geq b}$$ is called the {\em upper image} of \eqref{VLP}.  
\begin{definition}\label{def.sltnVLP}
A point $x \in S$ is said to be a {\em minimizer} for \eqref{VLP} if
there is no $v \in S$ such that $P v \leq_C P x$, $P v \neq P x$, that
is,
$$x \in S,\quad P x \not \in P[S] + C\smz.$$ A direction $x \in \R^n
\smz$ of $S$ is called a {\em minimizer} for \eqref{VLP} if the point
$x$ is a minimizer for the homogeneous problem. This can be expressed
equivalently as
$$x \in (0^+ S) \smz,\quad P x \not \in P[0^+ S] + C\smz.$$
If $(S\poi, S\dir)$ is feasible for \eqref{VLP} and the sets $S\poi$,
$S\dir$ are finite; and if
\begin{equation}\label{eq_infatt}
\conv P[S\poi] + \cone P[S\dir] + C = \P,
\end{equation}
then $(S\poi, S\dir)$ is called a {\em finite infimizer} for \eqref{VLP}.\par
  A finite infimizer $(S\poi, S\dir)$ is called a {\em solution} to
  \eqref{VLP} if its two components consist of minimizers only.
\end{definition}
This solution concept for \eqref{VLP} has been introduced in
\cite{Loehne11}. It can be motivated theoretically by a combination of
minimality and infimum attainment established in \cite{HeyLoe11}. Its
relevance for applications has been already discussed indirectly in
earlier papers, see e.g.\ \cite{Dauer87, DauLiu90, Benson98,
  EhrLoeSha12}. The solver {\sc Bensolve} \cite{bensolve, LoeWei16} uses this
concept.
\end{section}

\begin{section}{Equivalence between (PP), (MOLP) and (VLP)}
As a first result, we show that a solution of \eqref{P} can be easily
obtained from a solution of the following multiple objective linear
program
\begin{equation}\label{associated_molp}
	\min \begin{pmatrix} y \\-e^T y \end{pmatrix} \text{ s.t. }  G
        x + H y \geq h,
\end{equation}
where $e := (1,\dots,1)^T$. Both problems \eqref{P} and
\eqref{associated_molp} have the same feasible set but
\eqref{associated_molp} has one additional objective space dimension.

\begin{theorem} \label{th1} Let a polyhedral projection problem
  \eqref{P} be given. If \eqref{P} is feasible, then a solution
  $(S\poi,S\dir)$ (compare Definition \ref{def.sltnVLP}) of the associated
  multiple objective linear program \eqref{associated_molp}
  exists. Every solution $(S\poi,S\dir)$ of \eqref{associated_molp} is
  also a solution of \eqref{P} (Definition \ref{def.sltnPP}).
\end{theorem}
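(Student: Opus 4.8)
The strategy is to exploit that \eqref{P} and \eqref{associated_molp} share the same feasible set, and that the upper image $\P$ of \eqref{associated_molp} is a ``lifted'' copy of the polyhedron $Y$. The plan is to first establish that $\P = \cb{(y,t) \in \R^{p+1} \st y \in Y,\ t \geq -e^T y}$, using that for the MOLP the ordering cone is the standard cone $C = \R^{p+1}_+$ and that the last objective $-e^T y$ is a linear function of the first $p$ objectives. From this description one reads off that $\proj$ onto the first $p$ coordinates maps $\P$ bijectively (affinely) onto $Y$, and that the recession cone of $\P$ projects onto $0^+Y$. Existence of a solution of \eqref{associated_molp} is then standard: since \eqref{P} is feasible, $S$ is nonempty, hence the MOLP is feasible; moreover its upper image $\P$ contains no line (the variable $t$ is bounded below on $\P$ over any fixed $y$, and $Y$ itself, being a projection of a polyhedron, is a polyhedron whose lineality is controlled) so by the existing theory of \eqref{VLP} (the solution concept of \cite{Loehne11,HeyLoe11}) a solution $(S\poi,S\dir)$ exists.

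For the second assertion, suppose $(S\poi,S\dir)$ is a solution of \eqref{associated_molp}. I would show that $(S\poi,S\dir)$, viewed now as a pair of feasible points and directions for \eqref{P} (legitimate, since the feasible sets coincide and the objective map of \eqref{associated_molp} has trivial kernel in the $y$-block so a feasible direction stays a feasible direction), satisfies \eqref{eq_repr_k}, i.e.\ $Y = \conv \proj X\poi + \cone \proj X\dir$ where $X\poi$, $X\dir$ here are the $(x,y)$-tuples in $S\poi$, $S\dir$ but we only need their $y$-parts after projection. By the infimizer property \eqref{eq_infatt},
\begin{equation*}
\conv P[S\poi] + \cone P[S\dir] + C = \P,
\end{equation*}
where $P$ is the objective matrix $(y,-e^Ty)^T$ of \eqref{associated_molp}. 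Applying the projection $\proj_{\R^p}$ (dropping the last coordinate) to both sides, and using that $\proj_{\R^p}(C) = \R^p_+ \times \{$drop$\} = \R^p_+$ wait — more carefully, $\proj_{\R^p}$ of the standard cone $\R^{p+1}_+$ is $\R^p_+$, and $\proj_{\R^p}\P = Y$, one obtains $\conv \proj P[S\poi] + \cone \proj P[S\dir] + \R^p_+ = Y$. The remaining task is to remove the spurious $+\R^p_+$: this is exactly where the extra objective $-e^Ty$ does its work. Because $-e^Ty$ is being minimized, no minimizer of \eqref{associated_molp} can have its $y$-part dominated coordinatewise from below by another feasible $y$-part without also decreasing $-e^Ty$, i.e.\ the competition between the first $p$ objectives (which pull $y$ down) and the last one (which pulls $\sum y_i$ up) pins $y$ to the boundary of $Y$ in every coordinate direction. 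I would make this precise by showing that $\proj P[S\poi] \subseteq Y$ and $\proj P[S\dir] \subseteq 0^+Y$ (immediate, since the projected objective values are feasible $y$'s / recession directions), and conversely that every $y \in Y$ is already in $\conv \proj P[S\poi] + \cone \proj P[S\dir]$, because the point $(y, -e^Ty) \in \P$ is a boundary point of $\P$ lying on a supporting hyperplane with a strictly positive component in the $t$-direction, forcing the $C$-term in the representation of $(y,-e^Ty)$ to vanish in its $t$-coordinate, hence (as $C = \R^{p+1}_+$) to vanish entirely.

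The main obstacle I anticipate is the careful bookkeeping around directions and the lineality space: one must check that $S\dir$ consisting of minimizing directions for \eqref{associated_molp} really projects onto a generating set of $0^+Y$ and not something smaller, and that no recession direction of $Y$ is ``lost'' because it corresponds to a recession direction of $\P$ lying inside $C$. The clean way is to run the same supporting-hyperplane argument for the homogeneous problem: a direction $(x,y)$ with $Gx + Hy \geq 0$ and $y \neq 0$ has $(y,-e^Ty)$ on the boundary of the recession cone $0^+\P$ with the same strictly-positive-in-$t$ support functional, so its $C$-component vanishes, giving $0^+Y = \cone\proj P[S\dir]$. Combining the point part and the direction part yields \eqref{eq_repr_k}, and since feasibility of $(S\poi,S\dir)$ for \eqref{P} is inherited from feasibility for \eqref{associated_molp}, the pair is a solution of \eqref{P}. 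I would also remark that the finiteness of $S\poi$, $S\dir$ required in Definition \ref{def.sltnPP} is immediate because it is part of the definition of a solution of \eqref{VLP}.
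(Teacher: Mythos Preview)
Your plan is in the right spirit but contains two concrete errors and misses the simplification that makes the paper's argument short.

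\textbf{Existence.} Your claim that the upper image $\P$ of \eqref{associated_molp} contains no line is false in general: if $Y$ has a lineality direction $d\neq 0$, then $(d,-e^Td)$ is a lineality direction of $\P$. What \emph{is} true (and sufficient for the general existence result you invoke) is that the lineality space of $\P$ meets $\R^{p+1}_+$ only in the origin, since every such lineality direction lies in the hyperplane $\{z\in\R^{p+1}:e^Tz=0\}$. The paper sidesteps this detour entirely: since $Y$ is a nonempty polyhedron it has a finite V-representation, and any finite feasible pair $(X\poi,X\dir)$ realizing that representation is already a solution of \eqref{associated_molp}, because $P[S]\subseteq\{z:e^Tz=0\}$ forces every feasible point and direction to be a minimizer automatically.

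\textbf{Second assertion.} Your equation ``$\proj_{\R^p}\P=Y$'' is wrong: projecting $\P=P[S]+\R^{p+1}_+$ onto the first $p$ coordinates gives $Y+\R^p_+$, not $Y$. You notice the problem and propose a supporting-hyperplane argument to remove the spurious $\R^p_+$, but the hyperplane you are reaching for is simply $\{z:e^Tz=0\}$, and using it \emph{before} projecting collapses the whole argument to one line. This is what the paper does: from \eqref{eq_infatt} one has $B+\R^{p+1}_+=P[S]+\R^{p+1}_+$ with $B:=\conv P[S\poi]+\cone P[S\dir]$. Since $B\subseteq P[S]\subseteq\{z:e^Tz=0\}$, any $y\in P[S]$ written as $y=v+c$ with $v\in B$, $c\in\R^{p+1}_+$ satisfies $0=e^Ty=e^Tv+e^Tc=e^Tc$, hence $c=0$. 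Thus $B=P[S]$ in $\R^{p+1}$, and dropping the last coordinate gives \eqref{eq_repr_k} directly, with no separate treatment of points versus directions needed. Your route of projecting first and then arguing boundarywise could in principle be made to work, but as written the direction paragraph is only a restatement of intent, not an argument.
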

\begin{proof} Since \eqref{P} is feasible, the projected polyhedron $Y$ is nonempty. Thus it has a finite representation. This implies that a solution $\X=(X\poi,X\dir)$ of \eqref{P} exists. We show that  $\X$ is also a solution of the associated multiple objective linear program \eqref{associated_molp}. $\X$ is feasible for \eqref{associated_molp} and its two components are finite sets. By \eqref{eq_repr_k}, we have
	$$ P[S] = \cb{\begin{pmatrix}y\\-e^T y\end{pmatrix} \bigg|\; y
      \in Y} = \conv P[X\poi] + \cone P[S\dir] $$ which implies
  \eqref{eq_infatt} for $C=\R^{p+1}_+$, i.e., $\X$ is a finite
  infimizer.  We have
$$P[S] \subseteq \cb{y \in \R^{p+1} \st e^T y = 0},$$ which implies
  that all points and directions of $S$ are minimizers. Thus $\X$
  consists of minimizers only. We conclude that a solution of
  \eqref{associated_molp} exists.

Let $\S=(S\poi,S\dir)$ be an arbitrary solution of
\eqref{associated_molp}.  By \eqref{eq_infatt} we have
\begin{equation}\label{eq_1}	
	\conv P[S\poi] + \cone P[S\dir] + \R^{p+1} _+= P[S] +
        \R^{p+1}_+,
\end{equation}
where $\R^{p+1} _+$ denotes the nonnegative orthant.  We show that
\begin{equation}\label{eq_2}
\conv P[S\poi] + \cone P[S\dir] = P[S]
\end{equation}
holds. The inclusion $\subseteq$ is obvious by feasibility. Let $y \in
P[S]$, then $e^T y = 0$. By \eqref{eq_1}, $y \in B + \R^{p+1}_+$ for
$B:=\conv P[S\poi] + \cone P[S\dir]$. There is $v \in B$ and $c \in
\R^{p+1}_+$ such that $y = v + c$. Assuming that $c \in
\R^{p+1}_+\smz$ we obtain $e^T c > 0$. But $B \subseteq P[S]$ and
hence the contradiction $0 = e^T y = e^T v + e^T c > 0$. Thus
\eqref{eq_2} holds. Omitting the last components of the vectors
occurring in \eqref{eq_2}, we obtain \eqref{eq_repr_k}, which
completes the proof.
\end{proof}

Of course, (MOLP) is a special case of \eqref{VLP}. In order to obtain
equivalence between \eqref{VLP}, (MOLP) and \eqref{P}, it remains to
show that a solution of \eqref{VLP} can be obtained from a solution of
\eqref{P}. We assign to a given \eqref{VLP} the polyhedral projection
problem
\begin{equation}\label{associated_pp}
	\text{compute } \P = \cb{y \in \R^p \st \exists x \in \R^n :\,
          Z^T y \geq Z^T Px,\; A x \geq b}.
\end{equation}
Obviously, \eqref{VLP} is feasible if and only if
\eqref{associated_pp} is feasible. The next result states that a
solution of \eqref{VLP}, whenever it exists, can be obtained from a
solution of the associated polyhedral projection problem
\eqref{associated_pp}. This result is prepared by the following
proposition. The main idea is that non-minimal points and directions
can be omitted in a certain representation of a nonempty closed convex
set.

\begin{proposition}\label{prop_2}
  Let a nonempty compact set $V \subseteq \R^q$ of points and a
  compact set $R \subseteq \R^q\setminus\cb{0}$ of directions be given
  and define $\P \mathrel{\mathop:}= \conv V + \cone R$. Furthermore,
  let $C \subseteq \R^q$ be a nonempty closed convex cone.  If $\P + C
  \subseteq \P$ and
  \begin{equation}
    \label{eq:cond}
     L \cap C = \cb{0},
  \end{equation}
  where $L:=0^+\P \cap -0^+\P$ is the lineality space of $\P$, then
  \begin{equation*}
    \P = \conv \ofg{V\setminus(\P+C\setminus\cb{0})} + \cone
    \ofg{R\setminus(0^+\P+C\setminus\cb{0})} + C.
  \end{equation*}
\end{proposition}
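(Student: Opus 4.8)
The plan is to put $V':=V\setminus(\P+C\smz)$ and $R':=R\setminus(0^+\P+C\smz)$ — the ``minimal'' points and directions — and to prove $\P=\conv V'+\cone R'+C=:Q$ by two inclusions. The inclusion $Q\subseteq\P$ is immediate, since $\conv V'+\cone R'\subseteq\conv V+\cone R=\P$ and hence $Q\subseteq\P+C\subseteq\P$ by hypothesis. For the reverse inclusion I would first record some easy consequences of the hypotheses: from $\P+C\subseteq\P$ we get $C\subseteq 0^+\P$, whence $C\cap(-C)\subseteq 0^+\P\cap(-0^+\P)=L$ and so, by \eqref{eq:cond}, $C\cap(-C)=\{0\}$, i.e.\ $C$ is pointed; and for the same reason $(-C)\cap 0^+\P=\{0\}$ (any $z$ in this set has $-z\in C\subseteq 0^+\P$, hence $z\in L$, hence $-z\in L\cap C=\{0\}$). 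Throughout I also use that $\P$ is closed with $0^+\P=\cone R$ — the situation signalled by the remark before the proposition (for instance when $V,R$ are finite); this is genuinely needed, since the conclusion may fail for general compact $V,R$ with $\P$ not closed.

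I would then reduce $\P\subseteq Q$ to the two claims \textbf{(a)} $V\subseteq Q$ and \textbf{(b)} $\cone R\subseteq\cone R'+C$. Granting both, a generic point $\sum_i\lambda_i v_i+\sum_j\mu_j r_j$ of $\P=\conv V+\cone R$ lies in $Q$: the convex combination $\sum_i\lambda_i v_i$ of the $v_i\in V\subseteq Q$ lies in $Q$ because $Q$ is convex, $\sum_j\mu_j r_j\in\cone R\subseteq\cone R'+C$, and $Q+\cone R'+C=Q$ since $\cone R'$ and $C$ are convex cones.

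Both claims rest on a single minimality lemma: every $\leq_C$-minimal point of $\P$ lies in $\conv V'+\cone R'$, and every $\leq_C$-minimal point of the cone $0^+\P=\cone R$ lies in $\cone R'$. To prove the first part, write a minimal point $x^{*}$ as $x^{*}=\sum_i\lambda_i v_i+\sum_j\mu_j r_j$ with all coefficients positive, $v_i\in V$, $r_j\in R$. If some $v_{i_0}\notin V'$, say $v_{i_0}=p+c$ with $p\in\P$, $c\in C\smz$, then $q:=\lambda_{i_0}p+\sum_{i\ne i_0}\lambda_i v_i+\sum_j\mu_j r_j$ lies in $\P$ (it is a convex combination of points of $\P$ plus an element of $\cone R\subseteq 0^+\P$, and $\P+0^+\P=\P$) and satisfies $x^{*}=q+\lambda_{i_0}c$ with $\lambda_{i_0}c\in C\smz$, contradicting the minimality of $x^{*}$; an identical computation (now using a term $\mu_{j_0}(d+c)$ with $d\in 0^+\P$) rules out $r_{j_0}\notin R'$. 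The statement for the cone $0^+\P$ is the same argument performed inside it.

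Finally, for \textbf{(a)} and \textbf{(b)} I must show that every $v\in V$ lies $\leq_C$-above a minimal point of $\P$, and every $r\in R$ lies $\leq_C$-above a minimal point of $0^+\P$. For $v\in V$ set $M:=(v-C)\cap\P$: it is nonempty ($v\in M$), closed and convex, and its recession cone is $(-C)\cap 0^+\P=\{0\}$, so $M$ is compact. Since $C$ is a pointed closed cone, $M$ has a $\leq_C$-minimal element $x^{*}$ (e.g.\ any point of $M$ minimizing a functional from the interior of the dual cone $C^{*}$), and because any point of $\P$ lying $\leq_C$ below $x^{*}$ is automatically in $M$, $x^{*}$ is $\leq_C$-minimal in $\P$. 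By the lemma $x^{*}\in\conv V'+\cone R'$, and $v\in x^{*}+C\subseteq Q$, which is \textbf{(a)}. Claim \textbf{(b)} follows by the same reasoning applied to the cone $0^+\P$, using $r\in R\subseteq 0^+\P$ and $M_r:=(r-C)\cap 0^+\P$, again compact by the same recession-cone computation: a $\leq_C$-minimal element $d^{*}$ of $M_r$ lies in $\cone R'$ by the lemma, so $r\in d^{*}+C\subseteq\cone R'+C$, and as $\cone R'+C$ is a convex cone containing $R$ it contains $\cone R$. By the reduction above, \textbf{(a)} and \textbf{(b)} give $\P\subseteq Q$, hence $\P=Q$. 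The step I expect to be the genuine obstacle is the compactness of $M$ and $M_r$ — this is exactly where \eqref{eq:cond} (triviality of $(-C)\cap 0^+\P$) and the closedness of $\P$ enter; the rest is the bookkeeping with convex cones sketched above.
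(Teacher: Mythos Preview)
Your proof is correct and follows a genuinely different route from the paper's.

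The paper proceeds structurally: it fixes a complement $U$ of the lineality space $L$, writes $\P=L+(\P\cap U)$, and then argues separately about the extreme points of $\P\cap U$, the extreme directions of $0^+(\P\cap U)$, and the lineality directions. For each extreme point (resp.\ direction) it takes a representation via $V$ and $R$, splits every term into its $U$- and $L$-components, and uses extremality to force the $U$-parts to collapse; a contradiction argument then shows the contributing generators must lie in $V'$ (resp.\ $R'$, or in $C+L$). Three inclusions \eqref{eq:vjmin}, \eqref{eq:rimin}, \eqref{eq:lnspc} are assembled at the end.

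Your argument is order-theoretic: you observe that $(-C)\cap 0^+\P=\{0\}$ (an immediate consequence of \eqref{eq:cond} together with $C\subseteq 0^+\P$), so the ``section'' $(v-C)\cap\P$ is compact for every $v$, and since $C$ is pointed it carries a $\leq_C$-minimal element; your substitution lemma then shows any $\leq_C$-minimal element of $\P$ is already in $\conv V'+\cone R'$. This bypasses the lineality decomposition entirely and makes the role of \eqref{eq:cond} completely transparent --- it is exactly the compactness of the sections. The paper's route, in exchange, exposes more of the facial geometry of $\P$ and ties the result to the extreme skeleton, which is closer to how one would compute $V'$ and $R'$ in practice.

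One remark on your caveat about closedness: the paper asserts (and uses) that $\P$ is closed whenever $V$ and $R$ are compact with $0\notin R$; it does not treat this as an extra hypothesis. Your proof and the paper's are on equal footing here --- both rely on $\P$ being closed and $0^+\P=\cone R$ --- so your caution is not a gap, just a difference in what is taken as given versus what is justified inline.
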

\begin{proof}
The inclusion $\supseteq$ is obvious. To show the reverse inclusion,
let $ U$ be a linear subspace complementary to $L$.  Since $V$ and $R$
are compact, and $V \neq \emptyset$, $\P$ is a nonempty closed convex
set.  Then the set $\extr(\P \cap U)$ of extreme points of $\P\cap U$
is nonempty, see e.g.\ \cite[Section 2.4]{Gruenbaum03}. An element $v
\in \extr(\P \cap U)$ admits the representation
  \begin{equation*}
    v = \sum_{j\in J}\lambda_j v^j + \sum_{i\in I}\mu_i r^i\text{,}
  \end{equation*}
  with finite index sets $J$ and $I$, $v^j \in V$, $\lambda_j\geq
  0$ for $j\in J$, $\sum_{j\in J} \lambda_j = 1$, and $r^i \in R$,
  $\mu_i \geq 0$ for $i\in I$.  Every $v^j$ and $r^i$ may be
  decomposed by means of $v^j = v^j_ U + v^j_ L$ and $r^i = r^i_ U +
  r^i_ L$ with $v^j_ U,r^i_ U\in U$ and $v^j_ L,r^i_ L\in L$.
  Therefore,
  \begin{equation*}
    v = \sum_{j \in J}\lambda_j v^j_ U +\sum_{i \in I}\mu_i r^i_ U
    +\underbrace{ \sum_{j \in J}\lambda_j v^j_ L + \sum_{i \in I}\mu_i
      r^i_ L }_{{}\in L}\text{.}
  \end{equation*}
  Because $v\in U$, the last two sums vanish.  In the resulting
  representation
  \begin{equation*}
    v = \sum_{j \in J}\lambda_j v^j_ U + \underbrace{\sum_{i \in
        I}\mu_i r^i_ U}_{{}=0}
  \end{equation*}
  the second sum equals to zero because $v$ is an extremal point.  For
  the same reason, $v^j_ U = v$ for every $j$ with $\lambda_j>0$
  follows.  Hence, $v^j = v + v^j_ L$.
  
  Assume that $v^j \in \P + C\setminus\cb{0}$.  There exist $y\in \P$
  and $c\in C\setminus\cb{0}$ such that $v^j = y + c$.  With
  decompositions $y=y_ L + y_ U$ and $c=c_ L + c_ U$, where $c_ U \neq
  0$ (because of condition \eqref{eq:cond} and $c\neq 0$), this leads
  to
    $$ v + v^j_ L = v_j = y_ U + y_ L + c_ U + c_ L\text{,}$$ whence
   $$ v = y_ U + c_ U + \underbrace{y_ L + c_ L - v^j_
    L}_{{}=0}\text{.}$$ Again, the last part being zero results from
  $v$ being an element of $ U$.  Let $\mu \in (0,1)$ be given.  As a
  linear combination of elements of $ U$, $y_ U + \frac{1}{\mu} c_ U$
  belongs to $U$.  On the other hand,
  $$y_ U + \frac{1}{\mu} c_ U = \underbrace{y+\frac{1}{\mu}
    c}_{{}\in\P} \underbrace{-y_ L -\frac{1}{\mu} c_ L}_{{}\in L} \in
  \P.$$ Hence, $y_ U+\frac{1}{\mu} c_ U \in \P\cap U$.  By adding a
  meaningful zero, a representation of $v$,
  \begin{align*}
    v &= y_ U + c_ U\\ &= \mu \left(y_ U + \frac{1}{\mu}c_ U \right) +
    (1-\mu)y_ U\text{,}
  \end{align*}
  as a convex combination of different points of $\P\cap U$ is found,
  which contradicts $v$ being an extremal point.  Thus, $v^j \notin \P
  + C\setminus\cb{0}$; and altogether
  \begin{equation}
    \label{eq:vjmin}
    \extr(\P\cap U) \subseteq V\setminus(\P+C\setminus\cb{0}) + L.
  \end{equation}
  \par Now an extremal direction $r\in 0^+{\left(\P\cap U\right)}$ is
  considered. Since $V$ and $R$ are compact, $\P = \conv V + \cone R$
  implies $0^+\P = \cone R$ (see e.g.\ \cite[Corollary
    9.1.1]{Rockafellar72}). Thus, a representation
  \begin{equation*}
    r = \sum_{j \in J}\mu_j r^j_ U + \underbrace{\sum_{j \in J}\mu_j
      r^j_ L}_{{}=0}
  \end{equation*}
  by $r^j = r^j_U + r^j_L \in R$ can be obtained.  From extremality of
  $r$ it follows that all $r^j_U\neq 0$ with $\mu_j > 0$ coincide with
  $r$ up to positive scaling.  Such an $j$ is taken and, without loss
  of generality, one may assume $\mu_j=1$, i.e.:
  \begin{equation*}
    r^j = r+r^j_L\text{.}
  \end{equation*}
  We show that
  \begin{equation}\label{eq:d4}
   r^j \in \left(R\setminus \left(0^+\P+C\setminus\cb{0}\right)
   \right) \cup (C+L).
   \end{equation}
  Indeed, let $r^j\in 0^+\P+C\setminus\cb{0}$, that is, $r^j = y^h +
  c$ for $y^h \in 0^+\P$ and $c\in C\setminus\cb{0}$. We need to show
  that $r^j \in C+L$ follows. If $y^h\in L$, this is obvious. Thus,
  let $y^h\not\in L$. Consider the decompositions $y^h = y^h_U +y^h_L$
  and $c = c_U+c_L$ with $y^h_U,c_U\in U$, $y^h_L,c_L\in L$,
  $y^h_U\neq 0$ and, by \eqref{eq:cond}, $c_U \neq 0$. We obtain
   $$ r+r^j_L = r^j = y^h_U+y^h_L + c_U + c_L \text{.}$$ Since $r \in
  U$,
   $$ r = y^h_U + c_U + y^h_L+c_L-r^j_L= y^h_U + c_U\text{.}$$ From
  $L\subseteq 0^+\P$ the inclusion $0^+\P+L\subseteq 0^+\P$ and
  subsequently $y^h_U \in 0^+\P - \cb{y^h_L}\subseteq 0^+\P$ is
  deduced.  Moreover, $c_U\in C+L \subseteq C + 0^+\P \subseteq
  0^+\P$.  Noted that $0^+(\P\cap U) = 0^+\P\cap U$, both directions
  $y^h_u$ and $c_U$ are in $0^+(\P\cap U)$.  Therefore the
  representation of $r = y^h_U + c_U$ as conic combination of elements
  of $0^+(\P\cap U)$, in which $r$ was supposed to be extremal, proves
  equality of $r,y^h_U$ and $c_U$ up to positive scaling.  This
  implies $r\in C+L$ and $r^j \in C+L$.
  
  From \eqref{eq:d4} we deduce
  $$ r^j \in \cone
  \left(R\setminus\left(0^+\P+C\setminus\cb{0}\right)\right)+C + L$$
  and hence
  \begin{equation}
    \label{eq:rimin}
      0^+(\P\cap U)\subseteq \cone\of{R\setminus(0^+\P +
        C\setminus\cb{0})} + C + L.
  \end{equation}
  \par Any lineality direction $l\in L$ can be represented by a conic
  combination of elements of $R$: $l = \sum_{j\in J} \mu_j l^j$.  For
  any $l^k$ with $\mu_k >0$ this results in
  \begin{equation*}
    l^k = \frac{1}{\mu_k}\left(l - \sum_{j \in J\setminus\cb{k}}\mu_j
    l^j\right)\in - 0^+\P
  \end{equation*}
  and therefore in $l^k \in L$.  If such an $l^k$ is an element of $
  0^+\P+C$, then there exist $r\in 0^+\P$ and $c\in C$ with $l^k = r +
  c$.  This implies
  \begin{equation*}
    c = \underbrace{l^k-r}_{{}\in - 0^+\P} \in L\text{,}
  \end{equation*}
  and by \eqref{eq:cond}, $c=0$ follows. Therefore, $l^k\in
  R\setminus(0^+\P+C\setminus\cb{0})$, resulting in
  \begin{equation}
    \label{eq:lnspc}
     L\subseteq\cone\of{R\setminus(0^+\P+C\setminus\cb{0})}.
  \end{equation}\par
  Combining the results \eqref{eq:lnspc} and \eqref{eq:rimin} yields
  \begin{align*}
     L + 0^+ (\P\cap U) &\subseteq L + \cone\of{R\setminus(0^+\P +
       C\setminus\cb{0})} + C + L\\ &\subseteq
     \cone\of{R\setminus(0^+\P + C\setminus\cb{0})} + C\text{.}
  \end{align*}
Using \eqref{eq:vjmin} and the decomposition $\P= L + \left[\P\cap
  U\right]$, we obtain
  \begin{align*}
    \P &= L + (\P\cap U)\\ &= L + \of{\conv\extr(\P\cap U) +
      0^+(\P\cap U)}\\ &\subseteq
    \conv\of{V\setminus(\P+C\setminus\cb{0})}
    +\cone\of{R\setminus(0^+\P+C\setminus\cb{0})} + C,
  \end{align*}
  which proves the claim.
\end{proof}

\begin{theorem} \label{th2}
Let a vector linear program \eqref{VLP} be given. If \eqref{VLP} is
feasible, a solution of the associated polyhedral projection problem
\eqref{associated_pp} according to Definition \ref{def.sltnPP} exists. Let $\X=(X\poi,X\dir)$ be a solution of
\eqref{associated_pp}. Assume that \eqref{eq:cond} is satisfied and
set
	$$ S\poi := \cb{ x \in \R^n \st (x,y) \in X\poi,\; y \not\in
  \P + C\smz}, $$
	$$ S\dir := \cb{x \in \R^n \st (x,y) \in X\dir,\; y \not\in
  0^+ \P + C\smz}.$$ Then $(S\poi,S\dir)$ is a solution  of
\eqref{VLP} in the sense of Definition \ref{def.sltnVLP}.  Otherwise,
if \eqref{eq:cond} is violated, \eqref{VLP} has no solution.
\end{theorem}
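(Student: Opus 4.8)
The plan is to establish the three assertions in turn. Observe first that the set computed in \eqref{associated_pp} is nothing but the upper image $\P=P[S]+C$ of \eqref{VLP}, since $Z^Ty\ge Z^TPx$ is equivalent to $y-Px\in C$. Hence, if \eqref{VLP} is feasible, $\P$ is a nonempty polyhedron, so it has a finite representation $\P=\conv W+\cone D$; lifting it exactly as in the proof of Theorem \ref{th1} --- choosing a feasible preimage $x$ for each point of $W$, and using that the recession cone of a projected polyhedron is the projection of its recession cone to obtain feasible preimages for the directions in $D$ --- yields a solution $\X=(X\poi,X\dir)$ of \eqref{associated_pp}.

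For the second assertion, let $\X=(X\poi,X\dir)$ be a solution of \eqref{associated_pp} and assume \eqref{eq:cond}. Put $V:=\proj X\poi$ and $R:=\proj X\dir$; then $V$ is finite and nonempty, $R$ is finite and contained in $\R^q\smz$, and $\P=\conv V+\cone R$ by \eqref{eq_repr_k}. Since $C$ is a convex cone, $\P+C=P[S]+C+C=\P$, so together with the assumed \eqref{eq:cond} all hypotheses of Proposition \ref{prop_2} hold, giving
\begin{equation*}
 \P=\conv\of{V\setminus(\P+C\smz)}+\cone\of{R\setminus(0^+\P+C\smz)}+C.
\end{equation*}
The key step is to read off the infimum-attainment condition \eqref{eq_infatt} from this. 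If $(x,y)\in X\poi$ with $y\notin\P+C\smz$, then $x\in S\poi$ by definition, and $Z^Ty\ge Z^TPx$ gives $y\in Px+C\subseteq P[S\poi]+C$; hence $V\setminus(\P+C\smz)\subseteq P[S\poi]+C$, and analogously $R\setminus(0^+\P+C\smz)\subseteq P[S\dir]+C$. Applying $\conv$, resp.\ $\cone$ (and using $\cone(A+C)\subseteq\cone A+C$ for the convex cone $C$ and an arbitrary set $A$), the displayed identity yields $\P\subseteq\conv P[S\poi]+\cone P[S\dir]+C$. The reverse inclusion is clear from feasibility: $S\poi\subseteq S$ and $S\dir\subseteq0^+S$ give $P[S\poi]\subseteq\P$ and $P[S\dir]\subseteq 0^+\P$, and $\P+0^+\P=\P=\P+C$. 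Moreover $S\poi\neq\emptyset$, because $\P\neq\emptyset$ forces $V\setminus(\P+C\smz)\neq\emptyset$. Thus $(S\poi,S\dir)$ is a finite infimizer.

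It remains to verify that $S\poi$ and $S\dir$ consist of minimizers; here I would use that $C$ is pointed (because $\ker Z^T=\cb{0}$). Let $x\in S\poi$ and choose $y$ with $(x,y)\in X\poi$, $y\notin\P+C\smz$. Assuming $Px=Pv+c$ with $v\in S$, $c\in C\smz$, write $y=Px+c'$ with $c'\in C$; then $y=Pv+(c+c')$ and $c+c'\in C\smz$ since pointedness excludes $c+c'=0$, hence $y\in\P+C\smz$, a contradiction. So $x$ is a minimizer. For $x\in S\dir$ the identical computation with $S,\P$ replaced by $0^+S,0^+\P$ applies, once one notes $x\neq0$: if $x=0$, then feasibility of $(0,r)\in X\dir$ forces $r\in C\smz$, whence $r=0+r\in 0^+\P+C\smz$ (as $0\in0^+\P$), contradicting the choice of $r$. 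Therefore $(S\poi,S\dir)$ is a finite infimizer consisting of minimizers, i.e.\ a solution of \eqref{VLP}.

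Finally, suppose \eqref{eq:cond} is violated, so some $c\in L\cap C$ (with $L=0^+\P\cap-0^+\P$) has $c\neq0$, and suppose for contradiction that $(S\poi,S\dir)$ is a solution of \eqref{VLP}. Then $S\poi\neq\emptyset$; pick $x\in S\poi$. Since $-c\in 0^+\P$ and $Px\in\P=P[S]+C$, we have $Px-c\in\P$, say $Px-c=Pv+\tilde c$ with $v\in S$, $\tilde c\in C$; then $Px=Pv+(\tilde c+c)$ with $\tilde c+c\in C\smz$ by pointedness, so $Px\in P[S]+C\smz$, contradicting that $x$ is a minimizer. Hence \eqref{VLP} has no solution. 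I expect the second paragraph --- the bookkeeping between the pairs in $X\poi,X\dir$, their projections $V,R$ and the sets $S\poi,S\dir$, together with the correct invocation of Proposition \ref{prop_2} --- to be the part requiring the most care; the minimizer checks and the final paragraph are short pointedness arguments.
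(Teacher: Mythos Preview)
Your proof is correct and follows the same approach as the paper: existence via finite representation of polyhedra, the finite-infimizer property via Proposition~\ref{prop_2}, minimality by a pointedness argument, and nonexistence when \eqref{eq:cond} fails by exhibiting a dominating point. One simplification you missed (and the paper tacitly uses) is that for $(x,y)\in X\poi$ with $y\notin\P+C\smz$ one actually has $y=Px$ exactly, since $y=Px+c$ with $Px\in P[S]\subseteq\P$ and $c\in C$ forces $c=0$; the analogous statement holds for directions, so $V\setminus(\P+C\smz)\subseteq P[S\poi]$ and $R\setminus(0^+\P+C\smz)\subseteq P[S\dir]$ directly, eliminating your ``$+C$'' bookkeeping.
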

\begin{proof}
The existence of a solution of the polyhedral projection problem
\eqref{associated_pp} is evident, because a polyhedron has a finite
representation. Consider a solution $(X\poi,X\dir)$ of
\eqref{associated_pp}. From Proposition \ref{prop_2}, we obtain
	$$ \P = \conv P[S\poi] + \cone P[S\dir] + C.$$ Hence,
$(S\poi,S\dir)$ is a finite infimizer for \eqref{VLP}. It is evident
that $S\poi$ and $S\dir$ consist of minimizers only.

Assume now that \eqref{eq:cond} is violated. Take $y \in L\cap C\smz$,
then for any $x\in S$, $Px = Px - y + y$, where $Px-y \in \P$ and $y
\in C\smz$. Thus, $x$ is not a minimizer.
\end{proof}

The following statement is an immediate consequence of Theorem
\ref{th2}.

\begin{corollary}
A solution for \eqref{VLP} exists if and only if \eqref{VLP} is
feasible and \eqref{eq:cond} is satisfied.
\end{corollary}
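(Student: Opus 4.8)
The plan is to prove the two implications separately, each being essentially a bookkeeping combination of the three assertions contained in Theorem~\ref{th2}; no new argument is needed beyond what that theorem provides.

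For the ``if'' direction I would assume that \eqref{VLP} is feasible and that \eqref{eq:cond} holds. By the first sentence of Theorem~\ref{th2}, feasibility of \eqref{VLP} yields a solution $\X=(X\poi,X\dir)$ of the associated polyhedral projection problem \eqref{associated_pp}. Since \eqref{eq:cond} is satisfied, the second part of Theorem~\ref{th2} applies verbatim: the sets $S\poi$ and $S\dir$ constructed from $\X$ in the statement of that theorem form a solution of \eqref{VLP} in the sense of Definition~\ref{def.sltnVLP}. Hence a solution exists.

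For the ``only if'' direction I would suppose that a solution $(S\poi,S\dir)$ of \eqref{VLP} exists and deduce both required properties. First, by Definition~\ref{def.sltnVLP} a solution is in particular a finite infimizer, hence $(S\poi,S\dir)$ is feasible for \eqref{VLP}, which means $S\poi$ is a \emph{nonempty} set of feasible points; in particular $S\neq\emptyset$, i.e.\ \eqref{VLP} is feasible. Second, if \eqref{eq:cond} were violated, the last sentence of Theorem~\ref{th2} would assert that \eqref{VLP} has no solution, contradicting our assumption; therefore \eqref{eq:cond} holds.

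I do not expect a genuine obstacle here, since the corollary is exactly the logical closure of the three cases treated in Theorem~\ref{th2}. The only point I would make explicit is that ``being a solution'' already forces feasibility, via the nonemptiness of $S\poi$ built into the definition of a finite infimizer, so feasibility in the ``only if'' direction need not be re-derived from the data $A,b$ directly.
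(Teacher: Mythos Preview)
Your proposal is correct and matches the paper's approach: the paper simply records the corollary as ``an immediate consequence of Theorem~\ref{th2}'' without further argument, and your two directions are precisely the unpacking of that implication. Your remark that feasibility in the ``only if'' direction comes for free from the nonemptiness of $S\poi$ in Definition~\ref{def.sltnVLP} is the only point worth making explicit, and you handle it correctly.
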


In the remainder of this paper we show how a solution of \eqref{VLP}
can be obtained from an {\em irredundant solution} of the associated
projection problem \eqref{P}. A solution $(X\poi, X\dir)$ of \eqref{P}
is called {\em irredundant} if there is no solution $(V\poi,V\dir)$ of
\eqref{P} satisfying
$$ V\poi \subseteq X\poi,\quad V\dir \subseteq X\dir, \quad
(V\poi,V\dir) \neq (X\poi,X\dir).$$ The computation of an irredundant
solution of \eqref{P} from an arbitrary solution of \eqref{P} does not
depend on the dimension $n$. It can be realized, for instance, by
vertex enumeration in $\R^p$.

\begin{theorem} \label{th3}
Let a vector linear program \eqref{VLP} be given. If \eqref{VLP} is
feasible, an irredundant solution of the associated polyhedral
projection problem \eqref{associated_pp} exists. Let
$\X=(X\poi,X\dir)$ be an irredundant solution of
\eqref{associated_pp}. Assume that \eqref{eq:cond} is satisfied and
set
	$$ S\poi := \cb{ x \in \R^n \st (x,y) \in X\poi}, $$
	$$ S\dir := \cb{x \in \R^n \st (x,y) \in X\dir,\; y \not\in L
  + C\smz}.$$ Then $(S\poi,S\dir)$ is a solution of
\eqref{VLP}. Otherwise, if \eqref{eq:cond} is violated, \eqref{VLP}
has no solution.
\end{theorem}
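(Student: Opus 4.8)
The plan is to deduce the statement from Theorem~\ref{th2}. Since \eqref{VLP} is feasible, so is \eqref{associated_pp}; hence $\P$ is a nonempty polyhedron, possesses a finite representation, and therefore a solution of \eqref{associated_pp} exists, from which an irredundant one is obtained by deleting superfluous points and directions one at a time. If \eqref{eq:cond} is violated, \eqref{VLP} has no solution; this is already part of Theorem~\ref{th2}. So assume \eqref{eq:cond} holds and fix an irredundant solution $\X=(X\poi,X\dir)$ of \eqref{associated_pp}.

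Write $V:=\proj X\poi$ and $R:=\proj X\dir$, so that $\P=\conv V+\cone R$. As $V,R$ are finite, $0^+\P=\cone R$ (as in the proof of Proposition~\ref{prop_2}); and $\P+C\subseteq\P$ (evident from the form of \eqref{associated_pp}) gives $C\subseteq 0^+\P$, hence $C\smz\subseteq\cone R$. Irredundancy forces the map $(x,y)\mapsto y$ to be injective on $X\poi$ and on $X\dir$, and rules out removing any single element of $V$, respectively of $R$, from the representation $\P=\conv V+\cone R$. The core of the proof is to show that, on an \emph{irredundant} solution, the two filters occurring in Theorem~\ref{th2} act exactly as the present theorem claims: \textbf{(A)} $V\cap(\P+C\smz)=\emptyset$, i.e.\ no point is filtered; and \textbf{(B)} $R\cap(0^+\P+C\smz)\subseteq L+C\smz$, i.e.\ on $R$ the filters $0^+\P+C\smz$ and $L+C\smz$ coincide (the reverse containment being trivial from $L\subseteq 0^+\P$). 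Once (A) and (B) are available, $S\poi$ and $S\dir$ of the present statement coincide with the first, respectively second, component that Theorem~\ref{th2} assigns to $\X$; since $\X$ is in particular a solution of \eqref{associated_pp}, Theorem~\ref{th2} then shows $(S\poi,S\dir)$ is a solution of \eqref{VLP}. (One also checks, along the way, that $y-Px\in C$ together with (A) forces $y=Px$ for $(x,y)\in X\poi$, so that $P[S\poi]=V$; this is why all of $X\poi$ may be kept.)

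It remains to prove (A) and (B); both rest on the same manoeuvre --- expand the offending vector through $\P=\conv V+\cone R$, fold its $C$-component conically into $R$ (possible since $C\smz\subseteq\cone R$), and read off the coefficient of the offending generator. For (A): if $v\in V\cap(\P+C\smz)$, this coefficient is either $<1$, which exhibits $v\in\conv(V\setminus\cbk v)+\cone R$ and contradicts irredundancy, or it equals $1$, which pushes a nonzero element of $C$ into $(-0^+\P)\cap C\subseteq L$, contradicting \eqref{eq:cond}. For (B): given $r\in R$ with $r=w+c$, $w=\sum_s\beta_s s\in\cone R$, $c\in C\smz$, let $\beta$ be the coefficient of $r$ in $w$; if $\beta=1$ then $c\in(-0^+\P)\cap C\subseteq L$, contradicting \eqref{eq:cond}; if $\beta>1$ then $(\beta-1)r\in 0^+\P\cap(-0^+\P)$, so $r\in L$, and then a short computation gives $r\notin 0^+\P+C\smz$, contradicting the choice of $r$; hence $\beta<1$, and after also writing $c$ conically in $R$ and collecting coefficients one either finds $r\in\cone(R\setminus\cbk r)$ (contradicting irredundancy) or extracts an $L$-part $\ell$ with $r=\tfrac1\gamma(c+\ell)\in C\smz+L$ (the one degenerate subcase again forcing a nonzero vector of $C$ into $L$). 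I expect (B) to be the main obstacle: it is the only place where irredundancy is genuinely exploited --- to upgrade ``$r$ is $C$-dominated by some recession direction'' into ``$r$ is $C$-dominated already inside the lineality space'' --- and the branching on $\beta$, and subsequently on $\beta+\gamma$, has to be carried through while keeping \eqref{eq:cond} active in each branch.
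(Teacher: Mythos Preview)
Your proposal is correct and follows essentially the same route as the paper's proof: both reduce to Theorem~\ref{th2} and then carry out the same two case analyses (your (A) is precisely the paper's argument that every $(x,y)\in X\poi$ gives a minimizer, and your (B) is the contrapositive of the paper's argument that every $(x,y)\in X\dir$ with $y\notin L+C\smz$ gives a minimizer). The only difference is organizational --- you show the Theorem~\ref{th3} filters agree with the Theorem~\ref{th2} filters on an irredundant $\X$ and then invoke Theorem~\ref{th2} wholesale, whereas the paper invokes Theorem~\ref{th2} only for the finite-infimizer part and verifies the minimizer property directly; the underlying coefficient-splitting computations (your $\beta$, $\gamma$ versus the paper's $\lambda$, $\mu$) are identical up to rescaling.
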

\begin{proof}
By Theorem \ref{th2}, it remains to show that $(S\poi,S\dir)$ consists
of minimizers only.  Assume that $x$ for $(x,y)\in X\poi$ is not a minimizer.
There exists $z \in \P$ and $c \in C\smz$ such that $y = z + c$. Let
us denote the elements of $X\poi$ as
	$$ X\poi = \cb{(x^1,y^1), \dots, (x^{\alpha-1}, y^{\alpha
    -1}),(x, y)}.$$ The point $z$ can be represented by $X\poi$ and $d \in 0^+\P$, which
yields
$$ y - c = z = \sum_{i=1}^{\alpha-1} \lambda_i y^i + \lambda y + d, \qquad
\lambda_1,\dots,\lambda_{\alpha-1},\lambda \geq 0,\quad
\sum_{i=1}^{\alpha -1} \lambda_i + \lambda = 1.$$ We set $v :=
\sum_{i=1}^{\alpha-1} \lambda_i y^i \in \P$ and consider two cases:
(i) For $\lambda = 1$, we have $v=0$ and hence $c = -d$, a
contradiction to \eqref{eq:cond}. (ii) For $\lambda < 1$, we obtain
\begin{align*}
 y &= \sum_{i=1}^{\alpha -1} \frac{\lambda_i}{1-\lambda} y^i +
 \frac{1}{1-\lambda} (c+d) \in \conv\cb{y^1,\dots, y^{\alpha - 1}} +
 \cone\proj X\dir \\ &= \conv \of{( \proj X\poi) \setminus\cb{y}} +
 \cone\proj X\dir.
 \end{align*}
This contradicts the assumption that the solution $(X\poi,X\dir)$ is
irredundant.

	Assume now that $(x,y)\in X\dir$ with $y \not\in L+C\smz$ is
        not a minimizer. There exists $z \in 0^+\P$ and $c \in C\smz$
        such that $y = z + c$. Let us denote the elements of $X\dir$
        by
	$$ X\dir = \cb{ (x^1, y^1) , \dots, (x^{\beta-1}, y^{\beta
            -1}), (x , y)}.$$ The direction $z$ can be represented by
        $X\dir$, which yields
$$ y - c = z = \sum_{i=1}^{\beta-1} \lambda_i y^i + \lambda y, \qquad
        \lambda_1,\dots,\lambda_{\beta-1},\lambda \geq 0.$$ We set $v
        := \sum_{i=1}^{\beta-1} \lambda_i y^i \in 0^+\P$ and
        distinguish two cases: (i) Let $\lambda \geq1$. Then
        $(1-\lambda) y - v \in -0^+\P \cap C\smz$. This contradicts
        \eqref{eq:cond} since $C \subseteq 0^+\P$. (ii) Let $\lambda <
        1$. Then
$$ y = w + d \quad \text{ for } \quad \gamma_i :=
        \frac{\lambda_i}{1-\lambda} \geq 0, \quad w :=
        \sum_{i=1}^{\beta-1} \gamma_i y^i \quad \text{ and }\quad
        d:=\frac{1}{1-\lambda} c \in C\smz.$$ We have $w \not\in
        -0^+\P$ since otherwise $y \in L + C\smz$ would follow. For
        $d$ there is a representation
$$ d = \sum_{i=1}^{\beta -1} \mu_i y^i + \mu y \qquad
        \mu_1,\dots,\mu_{\beta-1},\mu \geq 0.$$ The condition $\mu
        \geq 1$ would imply $ w = (1-\mu) y - d \in -0^+\P$. Thus we
        have $\mu < 1$ and hence
$$ y = \sum_{i=1}^{\beta -1} \frac{\gamma_i + \mu_i}{1-\mu} y^i \in
        \cone\cb{y^1,\dots, y^{\beta - 1}} = \cone \of{( \proj X\dir)
          \setminus\cb{y}}.$$ This contradicts the assumption that the
        solution $(X\poi,X\dir)$ is irredundant.
\end{proof}
\end{section}

\begin{section}{Examples and remarks}
  It is well known that \eqref{VLP} can be solved by considering the
  multiple objective linear program
  \begin{equation}\label{eq.molp_clssc}
    \text{minimize } Z^TPx \text{ s.t. } A x \geq b\text{,}
  \end{equation}
  compare, c.f., \cite{SawNakTan85}.  The $r$ columns of the matrix
  $Z\in\R^{q\times r}$ correspond to the defining inequalities of $C$ (or equivalently, to the generating vectors of the dual cone).  The objective space dimension $r$ of
  \eqref{eq.molp_clssc} can be much larger than the objective space dimension $q$ of the initial vector linear program, see e.g. \cite{Roux2015} for a sample application and \cite{LoeRud15} for the number of inequalities required to describe the ordering cone there. In contrast to \eqref{eq.molp_clssc}, with our approach, the objective space dimension is increased only by one.\par

  In the following toy-example with $q=2$ we illustrate the procedure.    
   
  \begin{example}\label{ex:8}
    
Let us consider the following instance of \eqref{VLP}:
    \begin{equation}\label{eq.exmpl_vlp_orig}
      \text{minimize}\;      
      \begin{pmatrix}
         1 & -1\\
         1 & 1
      \end{pmatrix}
      x\quad \text{s.t.}\;\;x\in S\text{,}
    \end{equation}
    where the feasible set $S$ is defined by
    \begin{equation}
    S\mathrel{\mathop:}=\cb{x\in\R^n \bigg|
      \begin{pmatrix}
        1&0\\
        1&-1\\
        1&1
      \end{pmatrix}
      x \geq
      \begin{pmatrix}
        0\\-1\\-1
      \end{pmatrix}}
      \text{.}
    \end{equation}
    Minimization is understood with respect to the partial
    ordering generated by the ordering cone
    \begin{equation*}
      C\mathrel{\mathop:}=\cbgg{y\in\R^q\,\bigg|\,
        \underbrace{\begin{pmatrix}
          -1 & 2\\
          2 & 1
        \end{pmatrix}}_{\mathrel{=\mathop{:}}Z^T}y\geqslant 0
      }\text{.}
    \end{equation*}    
    The solution concept for the vector linear program \eqref{eq.exmpl_vlp_orig} demands the computation of a
    representation of the
      upper image $\P=P[S]+C$, which is depicted in Figure \ref{fig.img}.
    \begin{figure}[ht]
      \includegraphics[keepaspectratio=true,width=\textwidth]{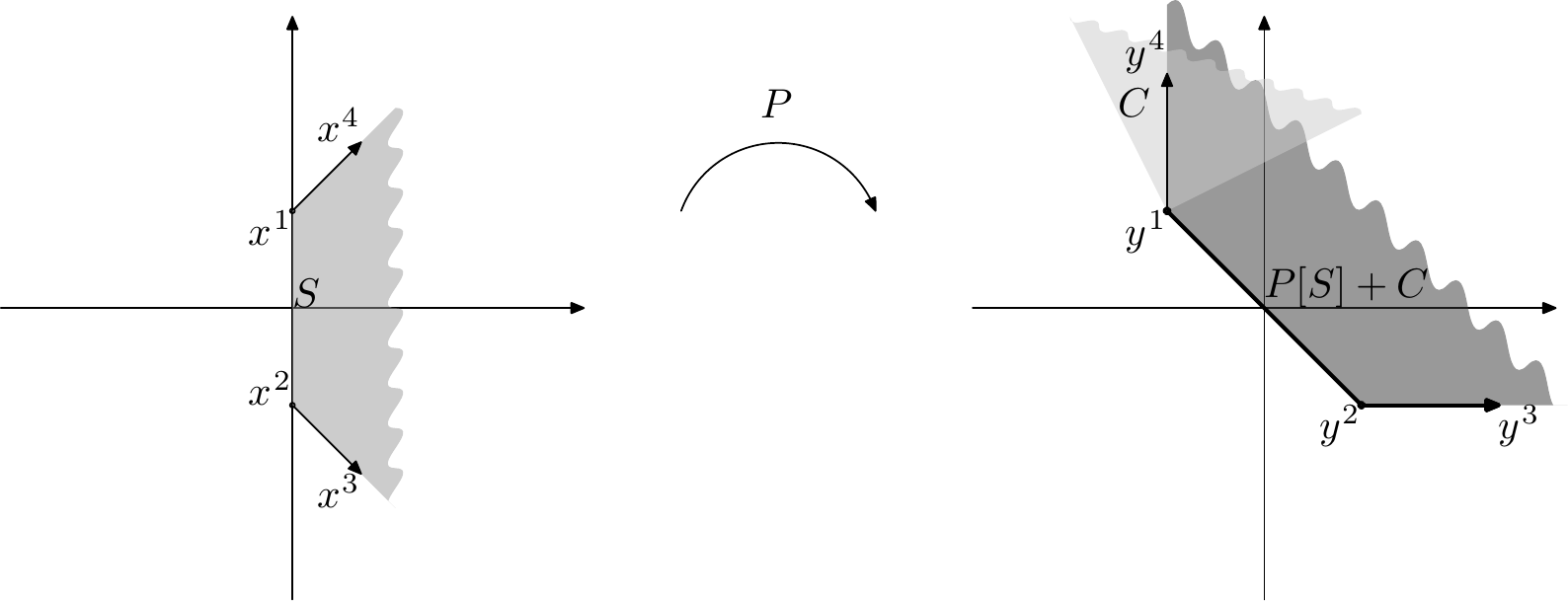}
      \caption{The feasible set $S$ and the upper image $P[S]+C$
        of Example \ref{ex:8}.  A solution is given by the feasible
        points $x^1$,$x^2$ and the feasible direction $x^3$.  Their
        respective image-vectors $y^1$,$y^2$ and $y^3$ generate the
        upper image.  It can be seen that $x^4$ is not part of a
        solution, as the image-vector $y^4$ belongs to the ordering
        cone $C$ and is therefore not a minimal direction.\label{fig.img}}
       
    \end{figure}
    First, we express $\P$ as an instance of \eqref{P}.  This step pushes the ordering cone into the
    constraint set, compare \eqref{associated_pp}:
    \begin{equation}\label{eq.pp_assoc}
      \text{compute}\;\P = \cb{y\in\R^q\,|\, \exists x\in S, Z^Ty\leq Z^TPx
      }\text{,}
    \end{equation}
    Now we formulate the corresponding instance of (MOLP), with one additional image space dimension, compare
    \eqref{associated_molp}:
    \begin{equation}\label{eq.ex_assoc_molp}
      \text{minimize}\,\underbrace{
      \begin{pmatrix}
        0 & 0 & 1 & 0\\
        0 & 0 & 0 & 1\\
        0 & 0 & -1 & -1
      \end{pmatrix}}_{{}=\mathrel{\mathop:}\hat{P}}
      \begin{pmatrix}
        x\\y
      \end{pmatrix}
      \quad
      \text{s.t.}\;
      \begin{pmatrix}
        x\\
        y
      \end{pmatrix}
      \in \hat{S}\text{,}
    \end{equation}
    where the feasible set $\hat{S}$ is the same as in
    \eqref{eq.pp_assoc}, i.e.
    \begin{equation*}
      \hat{S} {\mathrel{\mathop:}=}\cb{
      \begin{pmatrix}
        x\\
        y
      \end{pmatrix}\in\R^{n+q}\bigg|
        \begin{pmatrix}
          1 & 0\\
          1 & -1\\
          1 & 1\\
          -1& -3\\
          -3 & 1
        \end{pmatrix}
        x
        +
        \begin{pmatrix}
          0 & 0\\
          0 & 0\\
          0 & 0\\
          -1 & 2\\
          2 & 1
        \end{pmatrix}
        y
        \geqslant
        \begin{pmatrix}
          0\\
          -1\\
          -1\\
          0\\
          0
        \end{pmatrix}}\text{.}
    \end{equation*}
     Now we consider a solution
    $(\hat{S}\poi,\hat{S}\dir)$ to \eqref{eq.ex_assoc_molp}, which consists of $\hat{S}\poi\mathrel{\mathop:}=\cb{\hat{y}^1,\hat{y}^2}$ with feasible points
    \begin{equation*}
      \hat{y}^1 = 
      \begin{pmatrix}
        x^1\\y^1
      \end{pmatrix}=
      \begin{pmatrix}
        0\\ 1 \\-1\\1
      \end{pmatrix}\text{,}\qquad
      \hat{y}^2 = 
      \begin{pmatrix}
        x^2\\y^2
      \end{pmatrix}=
      \begin{pmatrix}
        0\\-1\\1\\-1
      \end{pmatrix}\text{;}
    \end{equation*}
    and $\hat{S}\dir\mathrel{\mathop:}=\cb{\hat{y}^3,\hat{y}^4}$ with
    the feasible directions
    \begin{equation*}
      \hat{y}^3 = 
      \begin{pmatrix}
        x^3\\y^3
      \end{pmatrix}=
      \begin{pmatrix}
        1\\-1\\ 2\\0
      \end{pmatrix}\text{,}\qquad
      \hat{y}^4 = 
      \begin{pmatrix}
        x^4\\y^4
      \end{pmatrix}=
      \begin{pmatrix}
        0\\0\\-1\\2
      \end{pmatrix}\text{.}
    \end{equation*}
    This means $\hat{y}^i$ is $\R^3_+$-minimal for $i=1,\ldots,4$ and
    \begin{equation*}
      \hat{P}[\hat{S}]+\R^3_+ = \conv\hat{P}[\hat{S}\poi]+\cone\hat{P}[\hat{S}\dir]+\R^3_+\text{.}
    \end{equation*}
	From Theorem \ref{th1} we deduce that $(\hat{S}\poi,\hat{S}\dir)$
	is also a  solution for the polyhedral projection problem \eqref{eq.pp_assoc}. This solution is irredundant, so the $x$-components of the points $\hat{y}^1,\hat{y}^2$, that is $x^1$ and $x^2$, are the points of a solution to the original vector linear program \eqref{eq.exmpl_vlp_orig} by
    Theorem \ref{th3}.
    It remains to sort out those directions whose $y$-part belongs to
    the ordering cone $C$ (compare Theorem \ref{th3}):  This is the case for the direction
    $\hat{y}^4$, as $Z^Ty^4=
    (5,0)^T \geqslant 0
    $.
    Thus the solution for \eqref{eq.exmpl_vlp_orig} consists of the
    feasible points $x^1,x^2$ and the feasible direction $x^3$ (also    compare Figure \ref{fig.img}):
    \begin{equation*}
      S\poi=\cb{
        \begin{pmatrix}
      0\\1
        \end{pmatrix},
        \begin{pmatrix}
      0\\-1
        \end{pmatrix}
      }\qquad
      S\dir=\cb{
        \begin{pmatrix}
          1\\-1
        \end{pmatrix}
      }\text{.}
    \end{equation*}
    It generates the upper image of \eqref{eq.exmpl_vlp_orig} (Figure
    \ref{fig.img}) by means of
    \begin{equation*}
      \P = \conv P[S\poi]+\cone P[S\dir]+C\text{.}
    \end{equation*}
	Finally, let us demonstrate how $\P$ can be obtained directly from $\hat{\P}$. We start with an irredundant representation  
	$$ \hat{\P} = \conv\cb{\bar y^1, \bar y^2} + \cone\cb{\bar y^3, \bar y^4, \bar y^5},$$
	where
	$$ \bar y^1=\begin{pmatrix}
		-1\\1\\ 0
	\end{pmatrix},\;\bar y^2=\begin{pmatrix}
		1\\-1\\ 0
	\end{pmatrix},\; \bar y^3=\begin{pmatrix}
		2\\0\\-2
	\end{pmatrix},\;\bar y^4=\begin{pmatrix}
		-1\\2\\ -1
	\end{pmatrix},\; \bar y^5=\begin{pmatrix}
		0\\0\\ 1
	\end{pmatrix}. $$
	We rule out those $\bar y^i$ where the condition $e^T \bar y^i = 0$ is violated, whence $\bar y^5$ is cancelled. Then we delete the last component of each vector and obtain	
		$$ \P= \conv\cb{y^1, y^2} + \cone\cb{y^3,y^4}$$
		with 
		$$ y^1=\begin{pmatrix}
			-1\\1
		\end{pmatrix},\; y^2=\begin{pmatrix}
			1\\-1
		\end{pmatrix},\;  y^3=\begin{pmatrix}
			2\\0
		\end{pmatrix},\; y^4=\begin{pmatrix}
			-1\\2
		\end{pmatrix}. $$	
  \end{example}
  
  In the preceeding example one needs one additional objective, wheras the ``classical'' approach
      \eqref{eq.molp_clssc} does {\em not} require any additional objective. Note that every step of the procedure presented here is independent of the actual values of  $q\geqslant 2$ and $r\geqslant q$. In the following example we consider the case $r > q+1$. This leads to an advantage in comparison to the classical method \eqref{eq.molp_clssc}. The cone $C \subseteq \R^3$ of the vector linear program \eqref{eq:vlp9} has $6$ extreme directions and a solution is obtained from a solution of a corresponding multiple objective linear program \eqref{eq:molp9} with only 4 objectives. Note that in the classical approach, see \eqref{eq.molp_clssc}, 6 objectives are required.
  \begin{example}\label{ex:9} Consider the vector linear program
	  \begin{equation}\label{eq:vlp9}
	  	\text{\rm minimize } Px \text{ s.t. } B x \geq a, \; x \geq 0
	  \end{equation} 
with ordering cone $C = \cb{y \in \R^3 \st Z^T y \geq 0}$ and data
$$ P=\begin{pmatrix}
 1& 0&-1\\ 
 1& 1& 0\\ 
 0& 1& 1\\ 
\end{pmatrix}\!,\, B = \begin{pmatrix}
 1& 1& 1\\ 
 1& 2& 2\\ 
 2& 2& 1\\ 
 2& 1& 2\\ 
\end{pmatrix}\!,\, a = \begin{pmatrix}
 3\\ 
 4\\ 
 4\\ 
 4\\ 
\end{pmatrix}\!,\, Z = \begin{pmatrix}
 4& 2& 4& 1& 0& 0\\ 
 2& 4& 0& 0& 1& 4\\ 
 2& 2& 2& 2& 2& 2\\ 
\end{pmatrix}\!.$$
We assign to \eqref{eq:vlp9} the multiple objective linear program
\begin{equation}\label{eq:molp9}
	 \min \hat P \begin{pmatrix} x\\ y \end{pmatrix} \text{ s.t. } \hat B \begin{pmatrix} x\\ y \end{pmatrix} \geq \hat a, \; x \geq 0
\end{equation}
with objective function 
$$\hat P \begin{pmatrix} x\\ y \end{pmatrix} = \begin{pmatrix} y\\ -e^T y \end{pmatrix}$$
and constraints $B x \geq a$, $Z^T y \geq Z^T P x$, $x \geq 0$. Thus, the data of \eqref{eq:molp9} are 
$$ \hat P = \begin{pmatrix}
 0& 0& 0& 1& 0& 0\\ 
 0& 0& 0& 0& 1& 0\\ 
 0& 0& 0& 0& 0& 1\\ 
 0& 0& 0&-1&-1&-1\\ 
\end{pmatrix}\!,\, \hat B = \begin{pmatrix}
 1& 1& 1& 0& 0& 0\\ 
 1& 2& 2& 0& 0& 0\\ 
 2& 2& 1& 0& 0& 0\\ 
 2& 1& 2& 0& 0& 0\\ 
-6&-4& 2& 4& 2& 2\\ 
-6&-6& 0& 2& 4& 2\\ 
-4&-2& 2& 4& 0& 2\\ 
-1&-2&-1& 1& 0& 2\\ 
-1&-3&-2& 0& 1& 2\\ 
-4&-6&-2& 0& 4& 2\\ 
\end{pmatrix}\!,\,
\hat a = \begin{pmatrix}
 3\\ 
 4\\ 
 4\\ 
 4\\ 
 0\\ 
 0\\ 
 0\\ 
 0\\ 
 0\\ 
 0\\ 
\end{pmatrix}\!.$$
A solution to \eqref{eq:molp9} consists of $\hat S\poi = \cb{\hat y^1, \hat y^2, \hat y^3}$ with 
$$
	 \hat y^1 = \begin{pmatrix} x^1\\y^1 \end{pmatrix} = 
	 \begin{pmatrix}
	  2\\ 
	  0\\ 
	  1\\ 
	  1\\ 
	  2\\ 
	  1\\ 
	 \end{pmatrix},\,
	 \hat y^2 = \begin{pmatrix} x^2\\y^2 \end{pmatrix} = 
	 \begin{pmatrix}
	  1\\ 
	  0\\ 
	  2\\ 
	 -1\\ 
	  1\\ 
	  2\\ 
	 \end{pmatrix},\,
	 \hat y^3 = \begin{pmatrix} x^3\\y^3 \end{pmatrix} = 
	 \begin{pmatrix}
	  0\\ 
	  0\\ 
	  4\\ 
	 -4\\ 
	  0\\ 
	  4\\ 
	 \end{pmatrix}
	 $$
and $\hat S\dir = \cb{\hat y^4,\dots,\hat y^8}$, (again $\hat y^i = (x^i,y^i)^T$) with 
$$
	 \hat y^4=
	 \begin{pmatrix}
	  0\\ 
	  0\\ 
	  0\\ 
	  0\\ 
	 -1\\ 
	  2\\ 
	 \end{pmatrix}\!,\;
	 \hat y^5=
	 \begin{pmatrix}
	  0\\ 
	  0\\ 
	  0\\ 
	  2\\ 
	  2\\ 
	 -1\\ 
	 \end{pmatrix}\!,\;
	 \hat y^6=
	 \begin{pmatrix}
	  0\\ 
	  0\\ 
	  1\\ 
	 -1\\ 
	  0\\ 
	  1\\ 
	 \end{pmatrix}\!,\;
	 \hat y^7=
	 \begin{pmatrix}
	  0\\ 
	  0\\ 
	  0\\ 
	  1\\ 
	  0\\ 
	  0\\ 
	 \end{pmatrix}\!,\;
	 \hat y^8=
	 \begin{pmatrix}
	  0\\ 
	  0\\ 
	  0\\ 
	  0\\ 
	  1\\ 
	  0\\ 
	 \end{pmatrix}\!.$$
	 One can easily check that $y^4, y^5, y^7, y^8 \in C$ and $y^6 \not \in C$. Thus, a solution to the VLP \eqref{eq:vlp9} consists of
	 $S\poi = \cb{x^1, x^2, x^3}$ and $S\dir = \cb{x^6}$, that is,
	 $$ S\poi = \cb{
	 	 \begin{pmatrix}
	 	  2\\ 
	 	  0\\ 
	 	  1\\ 
	 	 \end{pmatrix}
	 	 \begin{pmatrix}
	 	  0\\ 
	 	  0\\ 
	 	  4\\ 
	 	 \end{pmatrix}
	 	 \begin{pmatrix}
	 	  1\\ 
	 	  0\\ 
	 	  2\\ 
	 	 \end{pmatrix}},\quad S\dir = \cb{\begin{pmatrix} 0\\ 
	  0\\ 
	  1\\ 	 
	 \end{pmatrix}}
	 $$
The upper image $\hat \P$ of the MOLP \eqref{eq:molp9} is given by its vertices
$$
	 \bar y^1 =
	 \begin{pmatrix}
	  1\\ 
	  2\\ 
	  1\\ 
	 -4\\ 
	 \end{pmatrix}\!,\,
	 \bar y^2 = 
	 \begin{pmatrix}
	 -1\\ 
	  1\\ 
	  2\\ 
	 -2\\ 
	 \end{pmatrix} \!,\,
	 \bar y^3 = 
	 \begin{pmatrix}
	 -4\\ 
	  0\\ 
	  4\\ 
	  0\\ 
	 \end{pmatrix}
	 $$
	 and its extreme directions
	 $$
	 \bar y^4 = 
	 \begin{pmatrix}
	  0\\ 
	 -1\\ 
	  2\\ 
	 -1\\ 
	 \end{pmatrix}\!,\,
	 \bar y^5 = 
	 \begin{pmatrix}
	  2\\ 
	  2\\ 
	 -1\\ 
	 -3\\ 
	 \end{pmatrix}\!,\,
	 \bar y^6 = 
	 \begin{pmatrix}
	 -1\\ 
	  0\\ 
	  1\\ 
	  0\\ 
	 \end{pmatrix},$$
	 $$
	 \bar y^7 = 
	 \begin{pmatrix}
	  1\\ 
	  0\\ 
	  0\\ 
	 -1\\ 
	 \end{pmatrix}\!,\,
	 \bar y^8 = 
	 \begin{pmatrix}
	 	  0\\ 
	 	  1\\ 
	 	  0\\ 
	 	 -1\\ 
	 	 \end{pmatrix}
	 \!,\,
	 \bar y^9 = 
	 \begin{pmatrix}
	  0\\ 
	  0\\ 
	  0\\ 
	  1\\ 
	 \end{pmatrix}
	 .	 
	 $$
	 We sort out $\bar y^9$, as $e^T \bar y^9 \neq 0$, and we delete the last component of each vector $\bar y^1, \dots, \bar y^8$. As a result we obtain the vertices
	 $$
	 	 y^1 =
	 	 \begin{pmatrix}
	 	  1\\ 
	 	  2\\ 
	 	  1\\ 	 	 
	 	 \end{pmatrix}\!,\,
	 	 y^2 = 
	 	 \begin{pmatrix}
	 	 -1\\ 
	 	  1\\ 
	 	  2\\ 	 	 
	 	 \end{pmatrix} \!,\,
	 	 y^3 = 
	 	 \begin{pmatrix}
	 	 -4\\ 
	 	  0\\ 
	 	  4\\ 	 	   
	 	 \end{pmatrix}
	 	 $$
		 and the extreme directions
		 $$
		 y^4 = 
		 \begin{pmatrix}
		  0\\ 
		 -1\\ 
		  2\\  
		 \end{pmatrix}\!,\,
		 y^5 = 
		 \begin{pmatrix}
		  2\\ 
		  2\\ 
		 -1\\ 
		 \end{pmatrix}\!,\,
		 y^6 = 
		 \begin{pmatrix}
		 -1\\ 
		  0\\ 
		  1\\
		 \end{pmatrix}\!,\,
		 y^7 = 
		 \begin{pmatrix}
		  1\\ 
		  0\\ 
		  0\\
		 \end{pmatrix}\!,\,
		 y^8 = 
		 \begin{pmatrix}
		 	  0\\ 
		 	  1\\ 
		 	  0\\ 
		 	 \end{pmatrix}	 	 
		 $$	 
	 of the upper image $\P$ of the VLP \eqref{eq:vlp9}.	 
  \end{example}
  
  In the next example we have used the VLP solver {\sc bensolve}
  \cite{bensolve, LoeWei16} in order to compute the image of a linear map over a
  polytope, which can be expressed as a polyhedral projection
  problem. The solver is not able to handle ordering cones
  $C=\cb{0}$. Therefore a transformation into (MOLP) with one
  additional objective space dimension is required.\par
  
\begin{example}\label{ex1}
	Let $W$ be the $729$-dimensional unit hypercube and let $P$ be
        the $3\times 729$ matrix whose columns are the $9^3=729$
        different ordered arrangements of $3$ numbers out of the set
        $\cb{-4,-3,-2,-1,0,1,2,3,4}$. The aim is to compute the
        polytope $P[W]:=\cb{Pw\st w \in W}$. To this end, we consider
        the multiple objective linear program
$$ \text{minimize } \begin{pmatrix}Px \\ -e^TPx\end{pmatrix} \text{
            s.t. } x \in W,$$ having $4$ objectives, $729$ variables,
          and $729$ double-sided constraints. The upper image
          intersected with the hyperplane $\cb{y \in \R^4 \st e^T y =
            0}$ is the set $P[W]$, see Figure \ref{fig:2}.
\end{example}	

\begin{figure}[ht]	
	\begin{center}
	\includegraphics[scale=0.3]{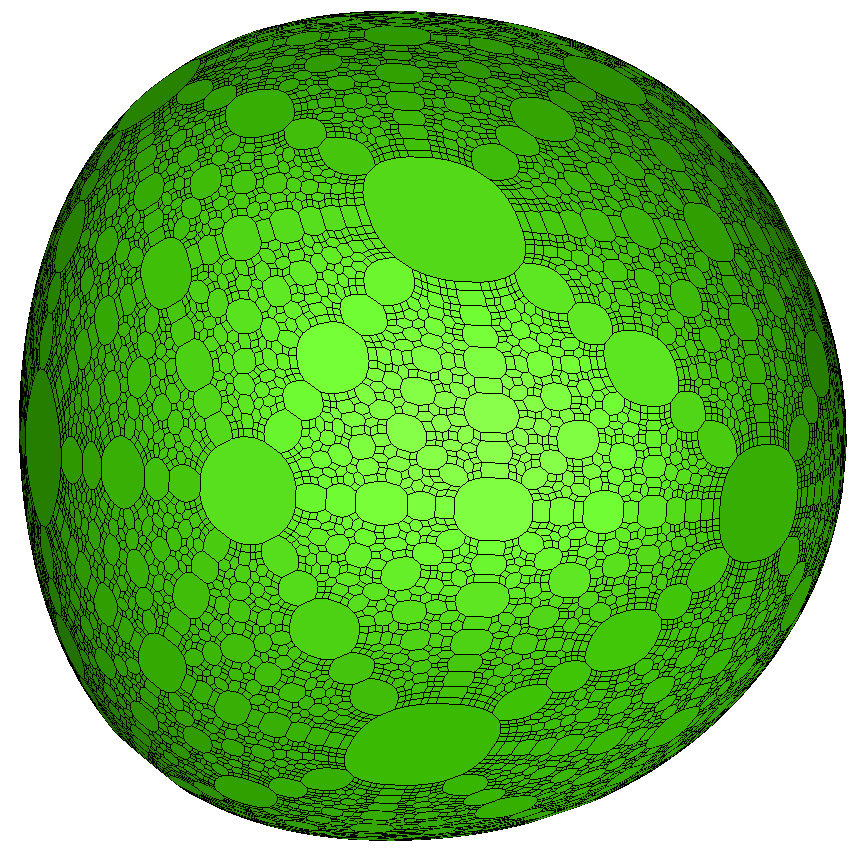}
	\end{center}	
	\caption{The polytope $P[W]$ of Example \ref{ex1} computed by
          {\sc Bensolve} \cite{bensolve,LoeWei16} via MOLP reformulation. The
          resulting polytope has $43680$ vertices and $26186$
          facets. The upper image $\P\subseteq \R^4$ of the
          corresponding MOLP has $43680$ vertices and $26187$
          facets. The displayed polytope is one of these facets, the
          only one that is bounded.}\label{fig:2}
\end{figure}

We close this article by enumerating some related results which can be
found in the literature. It is known \cite{JonKerMac08} that the parametric linear program
\begin{equation}
	\tag{PLP($y$)} \min c^T x \quad \text{ s.t. } \quad G x + H y
        \geq b
\end{equation}
is equivalent to the problem to project a polyhedral convex set onto a
subspace. For every parameter vector $y$ we can consider the dual
parametric linear program
\begin{equation}
	\tag{DPLP($y$)} \max (b-Hy)^T u \quad \text{ s.t. } \quad G^T
        u = c, \; u \geq 0,
\end{equation}
which is closely related to an equivalent characterization of a vector
linear program (or multiple objective linear program) by the family of
all weighted sum scalarizations, see e.g.\ \cite{Focke73}.

F{\"u}l{\"o}p's seminal paper \cite{Fulop93} has to be mentioned
because a problem similar to \eqref{associated_molp} was used there to
show that linear bilevel programming is equivalent to optimizing a
linear objective function over the solution of a multiple objective
linear program.

The book \cite{LotBusKam04} provides interesting links between
multiple objective linear programming and computation of convex
polyhedra.
\end{section}

\bibliographystyle{abbrv}

\end{document}